\colorlet{lgray}{white!85!black}
\colorlet{lred}{white!75!red}
\newtheorem{theorem}{Theorem} 
\newtheorem*{theorem*}{Theorem}
\newtheorem{proposition}[theorem]{Proposition}
\newtheorem{corollary}[theorem]{Corollary}
\theoremstyle{remark}
\newtheorem{remark}[theorem]{Remark}
\numberwithin{equation}{section} \numberwithin{theorem}{section}
\newcommand{\N}{\mathbb N}
\newcommand{\Z}{\mathbb Z}
\newcommand{\R}{\mathbb R}
\title[Interacting particle systems and Hecke algebras]
{Interacting particle systems and random walks on Hecke algebras}
\author{Alexey Bufetov}
\address[Alexey Bufetov]{Hausdorff Center for Mathematics \& Institute for Applied Mathematics, University of Bonn, Germany. E-mail: alexey.bufetov@gmail.com}
\begin{document}

\begin{abstract}
	
In this paper we show that a variety of interacting particle systems with multiple species can be viewed as random walks on Hecke algebras. This class of systems includes the asymmetric simple exclusion process (ASEP), M-exclusion TASEP, ASEP(q,j), stochastic vertex models, and many others. As an application, we study the asymptotic behavior of second class particles in some of these systems.

\end{abstract}

\maketitle

\section{Introduction}

The connection of asymmetric simple exclusion processes (ASEP) and Hecke algebras goes back to \cite{ADHR} who noticed that the generators of ASEP satisfy Hecke algebra relations. Since then the connection was studied, extended and used in various contexts, see e.g. \cite{M1}, \cite{L1}, \cite{CdGW}. In this paper, we treat  this connection from a slightly different point of view by considering random walks on Hecke algebras (Section \ref{sec:2}). We believe that there are two significant advantages of this new point of view.

First, the concept of random walks on Hecke algebras for arbitrary Coxeter groups naturally generates not only ASEP, but a variety of other interacting particle systems (Section \ref{sec:3}). This might lead to a certain unification of methods for studying these systems with various interaction rules and boundary geometries. The structure of a random walk on a Hecke algebra might be viewed as the source of the integrability of the models. One immediate application is that all these models inherit an explicit reversible stationary measure (see Section \ref{sec:Mal}).

Second, from this point of view the Hecke algebra itself, or, equivalently, its faithful representation, plays a central role, while in literature one often starts by studying the relevant smaller representations of the algebra directly. The advantage of our approach is that one can use the structure of the Hecke algebra which is not visible for smaller representations. In fact, this project started from the attempt to understand the algebraic origin of a certain symmetry of interacting particle systems \cite[Theorem 1.3]{AHR}, \cite[Lemma 3.1]{AAV}, \cite[Theorem 1.6.1]{BW}, \cite[Proposition 2.1]{BB}. While in these papers the symmetry was established just by induction arguments, all these papers have non-trivial asymptotic applications of this property. In this paper we show that these symmetries are particular cases of the well-known involution in the Hecke algebra (Proposition \ref{prop:CPsymmetry}; this was also independently noticed by P. Galashin). This allows to conclude that \textit{all} interacting particle systems appearing as random walks on Hecke algebras have this symmetry. We use this symmetry for the asymptotic analysis of the second class particle in the half-line ASEP (Theorems \ref{th:TASEPalpha} and \ref{th:05lineTASEP}) and q-TAZRP (Theorem \ref{th:qTAZRPsecClass}). The applications of this symmetry first relate two processes on Hecke algebra, and then both processes are projected to (different) smaller representations. It seems very hard to relate these two smaller representations directly, without the use of the full Hecke algebra.

The main goal of this paper is to emphasize the role of random walks on Hecke algebras as a useful concept for the study of interacting particle systems. We hope that many more applications can be obtained with the use of this point of view. 

\subsection*{Acknowledgments} 
I am grateful to M.~Balazs, A.~Borodin, P.L.~Ferrari, J.~Kuan, A.~Povolotsky, and O.~Zaboronski for useful discussions.  
The work was partially supported by the Deutsche Forschungsgemeinschaft (DFG, German Research Foundation) under Germany's Excellence Strategy -- EXC 2047 ``Hausdorff Center for Mathematics''.

\section{Random walks on Hecke algebras}
\label{sec:2}

\subsection{Hecke algebras}

We briefly recall some facts about Hecke (or Iwahori-Hecke) algebras for Coxeter groups. See e.g. \cite{Hum} for basic definitions and proofs.

Let $(W,S)$ be a Coxeter group with a matrix $\{ m(s_i,s_j) \}_{s_i, s_j \in S}$. This means that $W$ is a group generated by a set $S \subset W$; the generators $s_i \in S$ satisfy relations $(s_i s_j)^{m(s_i, s_j)} = e$, where $e$ is the identity element, $m(s_i,s_j) \in \N$, $m(s,s)=1$, and $m(\tilde s,s)= m( s,\tilde s) \ge 2$. We will also assume that $S$ is finite.

Each $w \in W$ can be written in the form $w=s_r s_{r-1} \dots s_1$ for some sequence of $s_i$'s from $S$. The length of $w$ (notation $l(w)$) is the smallest possible $r$ in such a decomposition. The length of the identity element is set to be 0. Length satisfies natural properties, in particular, for any $s \in S$, $w \in W$, we have either $l(s w) = l(w)+1$ or $l(sw) = l(w)-1$.

A \textit{Hecke algebra} $\mathcal H (W)$ is the algebra with a linear basis $\{ T_w \}_{w \in W}$ and the multiplication which satisfies the following rules for any $s \in S$, $w \in W$,
\begin{equation}
\label{eq:HeckeRules}
\begin{cases}
T_s T_w = T_{sw}, \qquad & \mbox{if $l(sw)=l(w)+1$}  \\
T_s T_w = (1-q) T_w + q T_{sw}, \qquad & \mbox{if $l(sw)=l(w)-1$}.
\end{cases}
\end{equation}
It is clear that such rules can be used for a computation of the product $T_{w_1} T_{w_2}$ for any $w_1, w_2 \in W$; a non-trivial (but very well-known) part is that the rules are consistent and indeed define an (associative) multiplication.

We assume that $q \in \R$. In most of our constructions we will additionally assume that $0 \le q \le 1$. Note that our convention in \eqref{eq:HeckeRules} does not correspond to the most standard convention (usually one writes the factor $(q-1)$ instead of $(1-q)$ in the second equation); our choice is motivated by probabilistic applications.

Let $\mathfrak i: \mathcal H (W) \to \mathcal H (W)$ be a linear map such that $\mathfrak i \left( T_w \right) = T_{w^{-1}}$. The following proposition is well-known (and can be straightforwardly proved by induction in $l(w)$ with the use of \eqref{eq:HeckeRules}).

\begin{proposition}
\label{prop:CPsymmetry}
The map $\mathfrak i$ is an involutive anti-homomorphism. In more detail, for any $T_1, \dots T_r \in \mathcal H (W)$ we have
\begin{equation*}
\mathfrak i \left( T_r T_{r-1} \dots T_2 T_1 \right) = \mathfrak i \left( T_1 \right) \mathfrak i \left( T_2 \right) \dots \mathfrak i \left( T_{r-1} \right) \mathfrak i \left( T_r \right),
\end{equation*}
and also, trivially, $\mathfrak i^2 \left( T_1 \right) = T_1$.

\end{proposition}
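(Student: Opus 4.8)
The plan is to split the statement into the (trivial) involutivity and the anti-homomorphism property, and to prove the latter by induction on length. Involutivity is immediate: since $(w^{-1})^{-1}=w$, we get $\mathfrak i^2(T_w)=T_w$ for every $w\in W$, and this extends to all of $\mathcal H(W)$ by linearity. For the anti-homomorphism property, I would first observe that, since multiplication in $\mathcal H(W)$ is bilinear and $\mathfrak i$ is linear, it suffices to check $\mathfrak i(T_uT_v)=T_{v^{-1}}T_{u^{-1}}$ for all $u,v\in W$; the displayed identity $\mathfrak i(T_r\cdots T_1)=\mathfrak i(T_1)\cdots\mathfrak i(T_r)$ for arbitrary $T_1,\dots,T_r\in\mathcal H(W)$ then follows by an immediate induction on $r$ from the case $r=2$.

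The first genuine step is the case $u=s\in S$, where $s^{-1}=s$ because $m(s,s)=1$ forces $s^2=e$. I would verify $\mathfrak i(T_sT_v)=T_{v^{-1}}T_s$ by applying \eqref{eq:HeckeRules} to the left-hand side in its two cases. If $l(sv)=l(v)+1$, then $T_sT_v=T_{sv}$, so the left-hand side equals $T_{v^{-1}s}$; since $l(v^{-1}s)=l(sv)=l(v^{-1})+1$, the right-multiplication analogue of \eqref{eq:HeckeRules} identifies this with $T_{v^{-1}}T_s$. If $l(sv)=l(v)-1$, then $T_sT_v=(1-q)T_v+qT_{sv}$, so the left-hand side equals $(1-q)T_{v^{-1}}+qT_{v^{-1}s}$; since now $l(v^{-1}s)=l(v^{-1})-1$, the right-multiplication rule again rewrites this as $T_{v^{-1}}T_s$. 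By linearity this gives $\mathfrak i(T_sx)=\mathfrak i(x)\,T_s$ for every $x\in\mathcal H(W)$.

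I would then pass to general $u$ by induction on $l(u)$. The base case $u=e$ is trivial since $T_e$ is the identity of $\mathcal H(W)$. For $l(u)\ge 1$ I would pick $s\in S$ with $l(su)=l(u)-1$ and set $u'=su$, so that $l(su')=l(u')+1$ and hence $T_u=T_sT_{u'}$ by \eqref{eq:HeckeRules}. Then
\[
\mathfrak i(T_uT_v)=\mathfrak i\bigl(T_s(T_{u'}T_v)\bigr)=\mathfrak i(T_{u'}T_v)\,T_s=\bigl(T_{v^{-1}}T_{(u')^{-1}}\bigr)T_s,
\]
using first the generator case and then the inductive hypothesis. Since $u^{-1}=(u')^{-1}s$ with $l(u^{-1})=l((u')^{-1})+1$, the right-multiplication rule gives $T_{(u')^{-1}}T_s=T_{u^{-1}}$, so $\mathfrak i(T_uT_v)=T_{v^{-1}}T_{u^{-1}}$ and the induction closes.

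The only ingredient needing separate justification is the right-multiplication analogue of \eqref{eq:HeckeRules}: $T_wT_s=T_{ws}$ when $l(ws)=l(w)+1$, and $T_wT_s=(1-q)T_w+qT_{ws}$ when $l(ws)=l(w)-1$. I would obtain it either from \eqref{eq:HeckeRules} by the same induction on $l(w)$, or by noting that the defining relations of $\mathcal H(W)$ --- the quadratic relations $T_s^2=(1-q)T_s+q$ and the braid relations $T_sT_{s'}T_s\cdots=T_{s'}T_sT_{s'}\cdots$ --- are unchanged under reversing the order of all products. That last remark in fact yields a short alternative proof of the whole proposition: it produces a (unique) algebra anti-automorphism of $\mathcal H(W)$ fixing every $T_s$, and evaluating it on a reduced word $w=s_{i_1}\cdots s_{i_r}$, for which $T_w=T_{s_{i_1}}\cdots T_{s_{i_r}}$ and $w^{-1}=s_{i_r}\cdots s_{i_1}$ is again reduced, shows it coincides with $\mathfrak i$. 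I do not anticipate any real obstacle: the argument is pure bookkeeping with \eqref{eq:HeckeRules}, and the only thing to be careful about is keeping the two length cases straight.
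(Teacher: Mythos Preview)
Your proposal is correct and follows essentially the same approach as the paper: the paper simply remarks that the proposition is well-known and ``can be straightforwardly proved by induction in $l(w)$ with the use of \eqref{eq:HeckeRules}'', which is precisely what you carry out in full detail (including the right-multiplication variant of \eqref{eq:HeckeRules} needed to close the induction).
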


\begin{corollary}
If $T_1, \dots, T_r \in \mathcal H (W)$ are such that $\mathfrak i \left( T_j \right) = T_j$, for any $1 \le j \le r$, then
\begin{equation*}
\mathfrak i \left( T_r T_{r-1} \dots T_2 T_1 \right) = T_1 T_2  \dots T_{r-1} T_r.
\end{equation*}
\end{corollary}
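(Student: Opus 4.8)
The plan is to deduce the corollary directly from Proposition \ref{prop:CPsymmetry}. By that proposition, $\mathfrak i$ is an anti-homomorphism, so for the given elements $T_1, \dots, T_r$ we have
\begin{equation*}
\mathfrak i\left( T_r T_{r-1} \dots T_2 T_1 \right) = \mathfrak i\left(T_1\right)\mathfrak i\left(T_2\right)\dots \mathfrak i\left(T_{r-1}\right)\mathfrak i\left(T_r\right).
\end{equation*}
I would then invoke the hypothesis $\mathfrak i(T_j) = T_j$ for each $1 \le j \le r$ to rewrite every factor on the right-hand side, which collapses it to $T_1 T_2 \dots T_{r-1} T_r$, exactly the asserted identity.

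There is essentially no obstacle here, so the only point I would flag for the reader is that the $T_j$ appearing in the corollary are arbitrary elements of $\mathcal H(W)$ fixed by $\mathfrak i$, not necessarily basis elements $T_w$; this causes no trouble because Proposition \ref{prop:CPsymmetry} is already stated for arbitrary $T_1, \dots, T_r \in \mathcal H(W)$ (the linearity of $\mathfrak i$ having been absorbed into that statement), so no further extension is needed. If one instead wanted a self-contained derivation not quoting the proposition, the natural route is induction on $r$: the base case $r=1$ is the hypothesis itself, and the inductive step splits off $T_r$, applies the two-factor identity $\mathfrak i(AB) = \mathfrak i(B)\mathfrak i(A)$, and uses the inductive hypothesis on $T_{r-1}\dots T_1$ — but since the proposition is already available this is unnecessary.
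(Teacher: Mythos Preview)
Your proposal is correct and is exactly the intended argument: the paper states the corollary immediately after Proposition \ref{prop:CPsymmetry} with no separate proof, treating it as a direct consequence of the anti-homomorphism identity together with the hypothesis $\mathfrak i(T_j)=T_j$. Your remark that the $T_j$ need not be basis elements is accurate and harmless, since the proposition is already formulated for arbitrary elements of $\mathcal H(W)$.
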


\subsection{Random walks on Hecke algebras}
\label{sec:randomWalksGeneral}

Let us set
$$
\mathcal{H}_{prob} (W) := \{ h \in \mathcal{H} (W) : h = \sum_{w \in W} \kappa_w T_w, \sum_{w \in W} \kappa_w = 1, \kappa_w \ge 0 \}.
$$
It is clear that $\mathcal{H}_{prob} (W)$ is closed under the multiplication and taking convex combinations of its elements. 

Consider first a general setting. Let $H_1, H_2, \dots, $ be an arbitrary random sequence of elements from $\mathcal{H}_{prob} (W)$. We will call the sequence $W_r := H_r H_{r-1} \dots H_1$ a \textit{random walk on the Hecke algebra} $\mathcal H (W)$. Next, consider the decomposition into the linear basis $W_r = \sum_{w \in W} \kappa_r (w) T_w$, and set
$$
\mathcal P_r (w) := \kappa_r (w)
$$
The collection of $\{ \mathcal P_r (w) \}_{w \in W}$ is a probability measure on $W$, and this process is a Markov (in discrete time $r$) stochastic process.
We refer to these processes as \textit{random walks on Coxeter groups generated by Hecke algebras}. With obvious modifications of notation, one can similarly define continuous time processes $W_t$, $\mathcal P_t (w)$, $t \in \R_{>0}$.

A natural class of examples appears if $\{ H_i \}$ are assumed to be independent identically distributed random elements.
As a specific example, let $H_i$ be a sequence of i.i.d. uniformly random elements from the set $\{ T_s \}_{s \in S}$ (recall that we assume that $S$ is finite). A continuous time version of this process would correspond to assigning to any $T_s$ an independent Poisson process in $\R_{>0}$, and applying $T_s$ whenever a point arrives from this Poisson process. We call these processes the \textit{discrete time ASEP on} $W$ and the \textit{continuous time ASEP on} $W$, respectively. This is a natural name since in the case $W=S_n$ (symmetric group), we will recover the standard multi-species ASEP (see also Section \ref{sec:3-1}).


\subsection{Mallows measure}
\label{sec:Mal}


Assume that the series $\sum_{w \in W} q^{-l(w)}$ converges, and denote its sum by $Z=Z(W,q)$. Note that this series converges if $W$ is finite or if $q>1$.
Define the \textit{Mallows measure} of $W$ via
$$
\mathcal M (w) := \frac{q^{-l(w)}}{Z}, \qquad w \in W.
$$

\begin{proposition}
The Mallows measure is a reversible stationary measure of any random walk on $W$ generated by its Hecke algebra.
\end{proposition}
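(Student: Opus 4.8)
The plan is to reduce the whole statement to the single algebraic identity $T_sM=M$ in $\mathcal H(W)$, where
\[
M \;:=\; \sum_{w \in W} \mathcal M(w)\, T_w \;=\; \frac{1}{Z}\sum_{w\in W} q^{-l(w)}\, T_w \;\in\; \mathcal H_{prob}(W),
\]
together with a one-line detailed-balance check for each generator. For the stationarity half, I would first observe that every basis element $T_w$ is a product of generators: if $w=s_{i_r}\cdots s_{i_1}$ is a reduced word then the first line of \eqref{eq:HeckeRules} gives $T_w=T_{s_{i_r}}\cdots T_{s_{i_1}}$ by induction on $l(w)$. Hence $T_sM=M$ for all $s\in S$ forces $T_wM=M$ for all $w\in W$, and then $hM=M$ for every $h=\sum_{w}\kappa_w T_w\in\mathcal H_{prob}(W)$ by linearity and $\sum_w\kappa_w=1$. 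Consequently, if the walk is started from the Mallows measure — i.e.\ with initial element $W_0=M$ instead of $W_0=T_e$ — then $H_rH_{r-1}\cdots H_1 M = M$ for \emph{every} realization of the $H_i$ and every $r$ (using also that $\mathcal H_{prob}(W)$ is closed under products), so $\mathcal P_r=\mathcal M$ for all $r$: the Mallows measure is stationary.

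The identity $T_sM=M$ is the heart of the matter, and I would establish it by a direct computation. Fix $s\in S$ and partition $W=W_s^{+}\sqcup W_s^{-}$ according to whether $l(sw)=l(w)+1$ or $l(sw)=l(w)-1$; the map $w\mapsto sw$ is an involution interchanging $W_s^{+}$ and $W_s^{-}$, with $l(sw)=l(w)+1$ for $w\in W_s^{+}$. Applying \eqref{eq:HeckeRules} termwise,
\[
Z\cdot T_sM \;=\; \sum_{w\in W_s^{+}} q^{-l(w)}\, T_{sw} \;+\; \sum_{w\in W_s^{-}} q^{-l(w)}\bigl((1-q)T_w+q\,T_{sw}\bigr).
\]
In the first sum substitute $u=sw\in W_s^{-}$, so that $q^{-l(w)}=q\cdot q^{-l(u)}$; in the last term substitute $u=sw\in W_s^{+}$, so that $q\cdot q^{-l(w)}=q^{-l(u)}$. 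Reading off the coefficient of $T_u$: for $u\in W_s^{+}$ it is $q^{-l(u)}$, and for $u\in W_s^{-}$ it is $q\cdot q^{-l(u)}+(1-q)q^{-l(u)}=q^{-l(u)}$. Therefore $Z\cdot T_sM=\sum_{u\in W}q^{-l(u)}T_u=Z\cdot M$, proving the identity.

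For reversibility, attach to $h\in\mathcal H_{prob}(W)$ the transition kernel $P_h(w,u)$ defined as the coefficient of $T_u$ in $hT_w$ (the one-step kernel of the walk applying $h$; for random steps it is the kernel of the averaged element, which still lies in the convex set $\mathcal H_{prob}(W)$). For $h=T_s$, the rules \eqref{eq:HeckeRules} show that the only non-trivial transitions join a pair $\{a,b\}$ with $b=sa$ and $l(b)=l(a)+1$, and there $P_{T_s}(a,b)=1$, $P_{T_s}(b,a)=q$, $P_{T_s}(b,b)=1-q$. Detailed balance with respect to $\mathcal M$ then holds:
\[
\mathcal M(a)\,P_{T_s}(a,b)\;=\;\frac{q^{-l(a)}}{Z}\;=\;\frac{q^{-l(b)}}{Z}\cdot q\;=\;\mathcal M(b)\,P_{T_s}(b,a),
\]
while self-loops are trivially balanced and all remaining pairs give $0=0$. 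Thus $\mathcal M$ is reversible for every elementary move $T_s$; for the continuous-time walks on $W$ (in particular the continuous-time ASEP on $W$), whose generator is $\sum_{s\in S}c_s(P_{T_s}-\mathrm{Id})$ with Poisson rates $c_s\ge 0$, reversibility with respect to $\mathcal M$ follows term by term, and any discrete-time walk applying a single random generator per step has kernel $\sum_s\kappa_s P_{T_s}$, a convex combination of $\mathcal M$-reversible kernels and hence itself $\mathcal M$-reversible.

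The only real obstacle is the identity $T_sM=M$; everything else is bookkeeping. The one point worth flagging is that reversibility is genuinely a property of the elementary generators $T_s$ — detailed balance survives convex combinations of kernels but not products of kernels — which is precisely the form in which it is needed for the continuous-time interacting particle systems considered below.
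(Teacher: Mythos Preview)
Your proof is correct, and the detailed-balance verification is precisely what the paper's one-line proof (``The detailed balance equation is immediate from the definitions'') is pointing to; you have simply written it out in full.

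The $T_sM=M$ computation you include is an extra ingredient not present in the paper. It is logically redundant for reversibility (detailed balance already implies stationarity), but it does buy something: it shows that $\mathcal M$ is \emph{stationary} for multiplication by an arbitrary $h\in\mathcal H_{prob}(W)$, not only for convex combinations of the elementary $T_s$'s. Your closing caveat --- that detailed balance genuinely fails under products of kernels, so reversibility is a statement about the elementary generators --- is correct and sharper than what the paper states; indeed one can check directly in $S_3$ that the kernel of $T_{s_1s_2}$ does not satisfy detailed balance with respect to $\mathcal M$. This is consistent with the paper's intended scope, since all the interacting particle systems in Section~\ref{sec:3} are built from the $T_s$'s (or their convex combinations) as Poisson clocks or single discrete steps.
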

\begin{proof}
The detailed balance equation is immediate from the definitions.
\end{proof}

For $q>1$ the measure is concentrated near the identity, while if $q<1$ and $W$ is finite, then the measure concentrated near the longest word. There is a meaningful limit $|W| \to \infty$, at least in cases when $W$ is the Weyl group of type A (\cite{GO1}, \cite{GO2}) and B (\cite{K}).

We will need a more detailed description of a Mallows measure on $S_n$ and assume that $q \in [0;1)$. It will be convenient to think about it as a random permutation of a collection of elements with the prescribed linear order on them $a_1 < a_2 < \dots < a_n$. The probability of a permutation $a_{c_1} a_{c_2} \dots a_{c_n}$ is proportional to $q^{n(n-1)/2 - \mathcal{N}\left( \{c_j\} \right)}$, where $\mathcal{N}\left( \{c_j\} \right)$ denotes the number of inversions in the permutation $i \mapsto c_i$, $1 \le i \le n$. Thus, the most probable word is the word $a_n a_{n-1} \dots a_1$. Whenever a certain collection of linearly ordered elements is distributed according to this Mallows measure, we will say that this collection is \textit{in q-equilibrium}.

There is an important sampling algorithm which allows to produce the random permutation of $a_i$'s which is in q-equilibrium. This algorithm is essentially due to Mallows \cite{M}, see e.g. \cite[Section 3]{GO1} for a detailed exposition. For $m=1,2,\dots$ denote by $G_{q,m}$ the independent truncated geometric random variables with the distribution
$$
\mathrm{Prob} \left( G_{q,m} = z \right) = \frac{q^{z-1} (1-q)}{1-q^m}, \qquad z=1, \dots m.
$$
The algorithm works as follows: write the word $a_n a_{n-1} \dots a_1$ (in this specific order), and take the letter which is in the position $G_{q,n}$ from the left (so this is $a_{n-G_{q,n}+1}$ on the first step). Denote by $\xi_1$ this letter and delete it from the initial ordered word. Next, take the letter which is in a position $G_{q,n-1}$ from the left in the ordered word. When we count positions from left to right, we do not count positions of deleted letters. Denote by $\xi_2$ the obtained letter, delete it from the word, and continue the procedure. On $k$-th step we use random variable $G_{q,n-k+1}$ and choose one of remaining $(n-k+1)$ letters according to the same rule. This will give us the random permutation $\xi_1 \xi_2 \dots \xi_n$, and the claim is that it is distributed according to the Mallows measure.

An important corollary of this algorithm is that $\xi_1$ depends on $G_{q,n}$ only, which allows to compute its distribution. Analogously, $\xi_1$, .., $\xi_s$ depend only on the first $s$ steps of this algorithm, which simplifies the computations of their joint distribution. Finally, note that due to the symmetry of the problem one can run the same algorithm for positions not from left to right, but from right to left, which allows, for example, to compute the distribution of $\xi_n$.

In \cite{GO1} the extension of Mallows measures to infinite permutations $\Z_{\ge 0} \to \Z_{\ge 0}$ was obtained. They can be sampled by the similar algorithm (see \cite[Section 4]{GO1}). In fact, the algorithm is even simpler, since the truncated geometric random variables in the construction above are replaced by the standard geometric random variables with parameter $q$. This can be visible as the limit $n \to \infty$ of the construction above, since $G_{q,n}$ becomes the standard geometric random variable as $n \to \infty$.

\section{Interacting particle systems generated by random walks on Hecke algebras}
\label{sec:3}

In Sections \ref{sec:3-1}--\ref{sec:gen-M-exc} $W$ will be the symmetric group $S_N$. In Section \ref{sec:12case} $W$ will be a hyperoctahedral group. Another important case appears when $W$ is an affine Weyl group. It corresponds to particle processes on a ring, and will be considered elsewhere.

For each process we describe the set of generators of a random walk on a Hecke algebra. This allows to define continuous / discrete time variations of these processes.

\subsection{Multi-species ASEP}
\label{sec:3-1}

Consider a multi-species ASEP on $\mathcal H (S_n)$ defined in Section \ref{sec:randomWalksGeneral} with $q \in [0;1]$. It is a tautology that the evolution from our definition coincides with the description via update rules in the multi-species ASEP, see e.g. \cite[Section 2.1]{AAV}, \cite[Section 3]{BB}. In probabilistic terms, the resulting dynamics lives on the space consisting of $N$ positions, and contains particles on $N$ types, so for each
type there is exactly one particle of this type. The most standard version of ASEP involves only two types, which are referred to as particles and holes, and one often considers the whole $\Z$ as the space of positions rather than its finite subset. The number of positions can be made infinite and the number of types can be reduced to two with the help of the next two remarks.

\subsection{Remark on an infinite space}

While all constructions of this section describe the interacting particle systems on a finite space, they can be extended to the infinite case by the standard argument of Harris (\cite{H1}, \cite{H2}). Namely, all the processes under discussion on the infinite space at any fixed time with probability 1 are (infinite) collections of processes on a finite space which did not interact with each other, see also \cite[Proof of Theorem 1.4]{AAV}, \cite[Proof of Theorem 3.1]{BB}. Thus, the results about infinite space processes will follow from the results about finite space versions.

\subsection{Remark on the number of types}

One can consider the projection of random walks on Hecke algebras to cosets of parabolic subgroups in $W$. Probabilistically, this means an identification of some of types of particles. In particular, one can recover in such a way the most studied interacting particle systems which contain only two types: particles and holes, and the systems with first class particles, second class particles, and holes.

\subsection{Stochastic six vertex model}

Define $Y_{s, x} := x T_s + (1-x) T_{e}$, where $s \in S$, $x \in [0;1]$, and $e$ is an identity element in $W=S_N$. A multiplication by $Y_{s,x}$ corresponds to a stochastic vertex in the multi-color stochastic six vertex model, see \cite[Figure 1.5.8]{BW}. If one is interested in a rectangular lattice, then the configuration in the rectangle of height 1 will be produced by the element
$$
W_{a,b} := Y_{(b-1,b),x} \dots Y_{(a+1,a+2),x} Y_{(a,a+1),x},
$$
and in order to obtain an arbitrary rectangle, one needs to consider the multiplication of such elements:
$$
W_{a-k,b-k} \dots W_{a-1,b-1} W_{a,b},
$$
which defines a configuration of the stochastic six vertex model in the rectangle $(b-a+1)$ by $(k+1)$. Note that other arrangement of multiplications will lead to other arrangement of lines of the stochastic six vertex model. Also note that the random walk on Hecke algebra in this case is deterministic. Nevertheless, the random walk on $W$ generated by Hecke algebra is non-trivial and actually more complicated than the continuous time ASEP dynamics.

\subsection{ASEP(q,M)}
\label{sec:asep-qj}

In this section we consider a random walk on the Hecke algebra $\mathcal H( S_n)$ which produces the so called multi-species ASEP(q,M) process. Its single species version was introduced in \cite{CGRS} and the multi-species analog was introduced in \cite{K2}. For this, we will need to introduce certain elements in $\mathcal H( S_n)$ related to Mallows measures. As before, we fix $q \in [0;1)$.

For integers $1 \le a<b \le n$ denote by $[a;b] := \{ j \in \Z : a \le j \le b \}$ the interval between $a$ and $b$, and by $S_{a;b} \subset S_n$ the subgroup which permutes the elements from $[a;b]$ only. Define
$$
\mathcal{M}_{a;b} := \sum_{w \in S_{a;b}} q^{(b-a+1)(b-a)/2 - \mathcal{N}\left( w \right)} T_w, \qquad \mathcal{M}_{a;b} \in \mathcal H( S_n),
$$
where $\mathcal{N}\left( w \right)$ is the number of inversions in $w$. The main property of the element $\mathcal{M}_{a;b}$ is
$$
T_{w} \mathcal{M}_{a;b} = \mathcal{M}_{a;b} T_{w} = \mathcal{M}_{a;b}, \qquad \mbox{for any $w \in S_{a;b}$}.
$$
The multiplication by the element $\mathcal{M}_{a;b}$ can be thought of as bringing the existing (random) configuration to q-equilibrium inside the interval $[a;b]$ without changing anything outside of this interval.

Let $n = N M$, with $M,N \in \Z_{>0}$, and consider the following set of generators of a random walk on the Hecke algebra :
$$
\left\{ \mathcal{M}_{(x-1)M+1;xM} \mathcal{M}_{xM+1;(x+1)M} T_{(xM, xM+1)} \mathcal{M}_{(x-1)M+1;xM} \mathcal{M}_{xM+1;(x+1)M} \right\}_{x=1}^{N-1}.
$$
It is convenient to think about configurations arising in this process as consisting of $N$ blocks numbered from left to right, with each block having $M$ particles. Applying the generator element corresponding to $x \in [1;N-1]$, we bring into the q-equilibrium the blocks number $x$ and $x+1$, then these blocks interact in one place (the right-most particle from block $x$ interact via $T_{(xM, xM+1)}$ with the left-most particle from the block $x+1$), and these blocks are brought into q-equilibrium again. Thus, after applying of at least one of generators a block inside the configuration is in q-equilibrium, and will remain in q-equilibrium during the rest of the dynamics. However, the elements inside it might change due to the interaction between blocks.

This dynamics is equivalent to the multi-species ASEP(q,M) from \cite{CGRS}, \cite{K2} (there the notation is ASEP(q,j), with $2j=M$). Indeed, let us consider a single species process. This means that types of particles from 1 to N are split in a monotone way into two types, one for particle and another for holes. Since all blocks participating in the dynamics are in q-equilibrium, we can think of any block as one position which may contain from 0 to $M$ particles. The distribution of these particles inside the block is always independent of the rest of the dynamics.
Assume that the block number $x$ has $n_1$ particles (and $M-n_1$ holes), and the block number $x+1$ has $n_2$ particles (and $M-n_2$ holes) after such an identification. The action of a generator on these two blocks can lead to the three situations: One particle jumps from left to right, one particle jumps from right to left, or nothing changes. The first scenario happens if the rightmost position in block $x$ is occupied by one of $n_1$ particles, and the leftmost position in block $x+1$ is occupied by one of $M-n_2$ holes. The sampling algorithm of Mallows measure (see Section \ref{sec:Mal}) implies that these probabilities are equal to $(1-q^{n_1})/(1-q^M)$ and $(1-q^{M-n_2})/(1-q^M)$, respectively. Thus, the action of a generator leads to a jump of particle from block $x$ to block $x+1$ with probability
$$
\frac{(1-q^{n_1}) (1-q^{M-n_2})}{(1-q^M)^2}.
$$

The probability of a jump from the block $x+1$ to $x$ is the product of the probability that the leftmost position in block $x+1$ is occupied, the rightmost position in block $x$ is vacant, and the factor $q$ due to the asymmetry of the model. This produces the probability
$$
q^{M-n_2+n_1+1} \frac{(1-q^{n_2})(1-q^{M-n_1})}{(1-q^M)^2}.
$$
These probabilities are up to a constant (not depending on $n_1$ and $n_2$) factor the same as given in \cite[Section 3.1]{CGRS}. Similar and a bit more involved computations based on the sampling of Mallows measures allow to show that the rules for the second, third etc. class particles coincide with those from \cite{K2} (another way is to compare with the combinatorial description presented in \cite{K3}). 

Note that ASEP(q,M) has several important degenerations. First, obviously, for $M=1$ we recover the usual ASEP. For $M \to \infty$ we obtain the q-totally asymmetric zero range process (q-TAZRP). We will use this degeneration in Section \ref{sec:2qTazrp} below. Also, $q=0$ case gives rise to the $M$-exclusion TASEP. The construction from this section supplies the Hecke algebra structure for multi-species versions of these processes as well.

\subsection{General M exclusion asymmetric process}
\label{sec:gen-M-exc}

The key idea of the construction from the previous section is to use the elements $\mathcal{M}_{xM+1;(x+1)M}$ which bring the system to a q-equilibrium inside the blocks. This allows to treat positions inside a block as one position which contains $M$ particles of different types; their respective positions inside the block are governed by the Mallows measure and are independent of the rest of the configuration. One can use the same idea to construct many more processes, in which more than one particle is allowed to jump from one block to another.

Let us fix an arbitrary element $Y_{1;2M} \in \mathcal{H} \left( S_{1;2M} \right)$. For any $sh \in \Z$ the subalgebra $\mathcal{H} \left( S_{1;2M} \right)$ is naturally isomorphic to the subalgebra $\mathcal{H} \left( S_{sh+1;sh+2M} \right)$ via the shift of indices; denote by $Y_{sh+1;sh+2M}$ the image of $Y_{1;2M}$ under this isomorphism.

Let $n = N M$, with $M,N \in \Z_{>0}$, and consider the following set of generators of a random walk on the Hecke algebra :
$$
\left\{ \mathcal{M}_{(x-1)M+1;xM} \mathcal{M}_{xM+1;(x+1)M} Y_{((x-1)M+1, (x+1)M)} \mathcal{M}_{(x-1)M+1;xM} \mathcal{M}_{xM+1;(x+1)M} \right\}_{x=1}^{N-1}.
$$
Similarly to the previous section, the $\mathcal{M}$ factors guarantee that the blocks of size $M$ can be interpreted as one position with at most $M$ particles, while the element $Y_{((x-1)M+1, (x+1)M)}$ produces the rules how the neighboring blocks can exchange particles.
In particular, if we consider the projection to a single-species process, several particles might be allowed to jump forward or backwards (the specific probabilities are computed with the use of the element $Y_{((x-1)M+1, (x+1)M)}$ and algorithms for sampling Mallows measures). 
This scheme produces a variety of known and new models of multi-species interacting particle systems. Due to the construction, all these models have the type-position symmetry by Proposition \ref{prop:CPsymmetry}. 

\subsection{Half-line case}
\label{sec:12case}

Recall that the hyperoctahedral group can be viewed as a subgroup of permutations of the set $\{-N,\dots, ,-2,-1,1,2, \dots, N\}$ with the special property $\pi(-i) = -\pi(i)$, for all $i =1,2,\dots, N$. The general formalism of Section \ref{sec:2} defines the multi-species asymmetric exclusion process on this group. As in the symmetric group case, this process has natural interpretation in terms of interacting particles. Namely, we consider the space of positions $\{1,2,\dots, N\}$ which can be occupied by particles of types $\{-N,\dots, -1, 1, \dots, N\}$; however, types $i$ and $(-i)$ cannot be present in the system simultaneously. The multiplication by basis elements $T_s$, $s \in S$, results in the following interaction rules: In the ``bulk'', for positions $(n,n+1)$, $n \ge 1$, the rules are the same as for the multi-species ASEP from Section \ref{sec:3-1}. At boundary position $1$, the type of particle can change sign (with probabilities coming from the multiplication in the Hecke algebra). 

Thus, ASEP for the hyperoctahedral group produces multi-species ASEP with a boundary, and also produces natural rules for injecting / deleting various types of particles at the boundary. By projecting this process to parabolic subgroups, we obtain the rules for single-species / several species processes with the boundary. 

\begin{remark}
For simplicity, we used only one asymmetry parameter $q$ in \eqref{eq:HeckeRules}. However, Hecke algebras might have more asymmetry parameters; in particular, for BC root systems they have two asymmetry parameters $q_1$ and $q_2$, one of them corresponds to the asymmetry in the bulk, and another one --- at the boundary. The random walks on such Hecke algebras correspond to interacting particle systems in the same way. It seems that a particularly natural case from the probabilistic point of view will be to set to 0 the boundary asymmetry parameter, while keeping the bulk asymmetry parameter general. This leads to simpler rules for injection of particles at the boundary while still preserving the general ASEP (rather than TASEP) dynamics at the bulk.  
\end{remark}

\section{Asymptotic applications}
\label{sec:4}

\subsection{Second class particle in the semi-infinite ASEP}

In this section we consider the semi-infinite ASEP on $\Z_{\ge 1}$ with a source which inject or delete particles from 1.
Let us start with a process which involves only one class of particles (and holes). This is a continuous time process with empty initial configuration. In the bulk particles jump to the right with rate $1$ and jump to the left with rate $q$. At the boundary position $1$ they are injected with rate $\alpha \in \R_{>0}$ and deleted with rate $q \alpha$. All of these jumps are subject to the exclusion rule. Let $\eta_t (x)$ be a configuration at time $t \in \R_{\ge 0}$. Liggett \cite{L} proved the following description of these dynamics.

\begin{theorem}
\label{th:Liggett}
As $t \to \infty$, the dynamics converges to the stationary measure on $\Z_{\ge 1}$, where the convergence is in the sense of finite dimensional distributions. For $\alpha \le 1/2$ the limiting stationary measure is the product Bernoulli measure with parameter $\alpha$. For $\alpha > 1/2$, the limiting stationary measure is denoted by $\mu_{1/2}^{\alpha}$ and has a more complicated structure; however, asymptotically (``in the bulk'') it becomes the product Bernoulli measure with parameter 1/2.
\end{theorem}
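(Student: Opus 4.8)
The plan is to prove this by combining attractiveness of the dynamics with the classification of translation-invariant stationary measures of the ASEP on $\Z$, together with a second-class particle estimate; the statement itself is classical (\cite{L}). First I would realise the process on $\{0,1\}^{\Z_{\ge 1}}$ by the Harris graphical construction and check that it is \emph{attractive}: in the basic coupling the partial order $\eta\le\eta'$ is preserved by every bulk jump and by the boundary updates at site $1$ (adding or removing a particle there, driven by shared Poisson clocks, cannot create a disagreement of the wrong sign). Hence the semigroup $S_t$ is monotone. Since the empty configuration is the minimal state, $\delta_{\varnothing}\preceq \delta_{\varnothing}S_s$ for all $s\ge 0$, and applying $S_t$ gives $\mu_t:=\delta_{\varnothing}S_t\preceq \mu_{s+t}$; thus $t\mapsto\mu_t$ is stochastically increasing on the compact state space and converges weakly to a probability measure $\mu_\infty$, which is then $S_t$-invariant by the standard argument and is the minimal element of the set of stationary measures. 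This already yields the asserted convergence in finite-dimensional distributions; the remaining task is to identify $\mu_\infty$ in the two regimes.

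For the low-density regime $\alpha\le 1/2$ I would first verify, by a short generator computation, that the product Bernoulli measure $\mathrm{Bernoulli}(\alpha)$ (the density of the reservoir encoded by the boundary rates) is stationary --- every local test function reduces to the single identity that the net boundary current at site $1$ matches the bulk current $(1-q)\,\alpha(1-\alpha)$ of this measure. Since $\delta_{\varnothing}\preceq\mathrm{Bernoulli}(\alpha)$, monotonicity gives $\mu_\infty\preceq\mathrm{Bernoulli}(\alpha)$. For the reverse inequality I would run the empty-start process $\eta_t$ and the stationary process $\xi_t\sim\mathrm{Bernoulli}(\alpha)$ in the basic coupling, so that $\eta_t\le\xi_t$ for all $t$ and the discrepancies (sites occupied in $\xi_t$ but not in $\eta_t$) form a system of second-class particles that is never enlarged --- discrepancies only move, or get absorbed at site $1$. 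In an ambient density $\alpha<1/2$ a second-class particle has strictly positive asymptotic speed $(1-q)(1-2\alpha)$, hence it leaves every fixed window, so $\mathbb P(\eta_t|_\Lambda\neq\xi_t|_\Lambda)\to 0$ for each finite $\Lambda$, and therefore $\mu_\infty=\mathrm{Bernoulli}(\alpha)$. The endpoint $\alpha=1/2$ is the common boundary of the two regimes, with $\mu_{1/2}^{1/2}=\mathrm{Bernoulli}(1/2)$.

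For the maximal-current regime $\alpha>1/2$ no product measure of density $\le 1/2$ is stationary, and I would pin down $\mu_\infty=\mu_{1/2}^{\alpha}$ through its density profile. Monotonicity in the strength of the source gives $\mu_{1/2}^{\alpha}\succeq\mu_{1/2}^{1/2}=\mathrm{Bernoulli}(1/2)$, so the density is $\ge 1/2$ at every site. For the matching bound I would use that in any stationary measure the current across every bond is one and the same constant $J$, equal to the boundary injection current, and that $J\le\sup_{\rho}(1-q)\rho(1-\rho)=(1-q)/4$; a coupling/hydrodynamic argument --- comparing, far from the boundary, with the full-line ASEP and using that a bulk density exceeding $1/2$ cannot be sustained against the rightward drift --- then forces $J=(1-q)/4$ and the bulk density down to exactly $1/2$. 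Finally, by a Harris-type decomposition the restriction of $\mu_{1/2}^{\alpha}$ far from the boundary is, in the limit, a translation-invariant stationary measure of the ASEP on $\Z$ of density $1/2$, which by Liggett's classification of such measures must be $\mathrm{Bernoulli}(1/2)$; this is the claimed bulk behaviour.

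The first two steps are soft. The hard part will be the density upper bound in the maximal-current regime: proving that from the empty initial condition the density cannot build up past $1/2$ in the bulk even though $\alpha>1/2$. This is exactly the max-current phase transition; it needs a genuine coupling/hydrodynamic input (or, in modern language, the matrix-product description of the stationary measures on a finite segment together with the limit of segment length to infinity), well beyond the monotonicity used elsewhere, and it --- together with the classification of translation-invariant stationary measures of the ASEP on $\Z$ --- is the technical core of the theorem.
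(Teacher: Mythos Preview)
The paper does not give a proof of this theorem: it is quoted as a result of Liggett \cite{L} and used as a black box in the subsequent arguments. There is therefore no proof in the paper to compare your proposal against. Your outline is essentially Liggett's original strategy --- attractiveness gives monotone convergence from the empty configuration to the minimal stationary measure, and the limit is then identified via coupling with a product measure together with the classification of translation-invariant invariant measures for ASEP on $\Z$ --- and as a roadmap it is sound.

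One concrete point where the sketch does not go through as written: in the low-density phase you assert that a ``short generator computation'' shows $\mathrm{Bernoulli}(\alpha)$ is exactly stationary. With the boundary rates exactly as stated in the paper (inject at rate $\alpha$, remove at rate $q\alpha$) one finds, for $f(\eta)=\eta(1)$,
\[
\int Lf\,d\,\mathrm{Bernoulli}(\alpha)\;=\;q\,\alpha\,(1-2\alpha),
\]
which vanishes only for $q=0$ or $\alpha=1/2$. So for $q>0$ and $\alpha<1/2$ the product measure is \emph{not} invariant, your parenthetical ``density of the reservoir encoded by the boundary rates'' is not well-defined (the injection and removal rates correspond to different reservoir densities), and the sandwich $\mu_\infty=\mathrm{Bernoulli}(\alpha)$ does not close as you describe. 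This is arguably an imprecision in the paper's formulation rather than in your strategy; Liggett's actual argument does not rely on exact product stationarity at the boundary. For the maximal-current phase you correctly flag the density upper bound as the substantive difficulty, and your sketch is honest that this step requires genuine additional input beyond monotonicity.
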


With the use of the matrix product ansatz, Grosskinsky \cite{G} described the correlation functions of $\mu_{1/2}^{\alpha}$ in the totally asymmetric case $q=0$.
\begin{theorem}
\label{th:Grosskinsky}
Assume that $q=0$. For $z \in \Z_{\ge 2}$ we have the following expressions for the probability to find a particle in a given position:
$$
\mu_{1/2}^{\alpha} \left( \eta \in \{0,1 \}^{\Z_{\ge 1}}: \eta (z) = 1 \right) = \frac{1}{4^z} \sum_{k=2}^z \frac{(2(z-1) -k)! (k-1)}{(z-1)! (z-k)!} \left( (1+k) 2^k - \alpha^{-k} \right) =: \rho_{\alpha} (z),
$$
and
$$
\mu_{1/2}^{\alpha} \left( \eta \in \{0,1 \}^{\Z_{\ge 1}}: \eta (1) = 1 \right) = 1 - \frac{1}{4 \alpha} =: \rho_{\alpha} (1).
$$
Moreover, one has
$$
\mu_{1/2}^{\alpha} \left( \eta \in \{0,1 \}^{\Z_{\ge 1}}: \eta (z) = 1, \eta (z+1)=1, \dots, \eta(z+m-1) = 1 \right) = \frac{1 + (2 \rho_{\alpha} (z)-1) m}{2^m}.
$$
\end{theorem}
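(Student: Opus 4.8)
The plan is to obtain both the density $\rho_\alpha$ and the block probabilities from the matrix product ansatz for the $q=0$ half-line TASEP, reducing everything to the combinatorics of walks on $\Z_{\ge 0}$. Concretely, work on $\ell^2(\Z_{\ge0})$ with the unilateral shift $S$ (so $S^*S=I$) and set $D:=I+S^*$, $E:=I+S$; then $DE=D+E$ in the totally asymmetric normalization, and the left boundary covector $\langle W|$ whose $n$-th entry is $r^n$, with $r:=\tfrac{1-\alpha}{\alpha}$, is exactly the one satisfying $\langle W|\,E=\tfrac1\alpha\,\langle W|$ (it lies in $\ell^2$ precisely when $\alpha>1/2$, which is the regime at hand). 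Let $|V\rangle$ be the vector with $n$-th entry $n+1$; a direct check shows it is an honest, non-normalizable eigenvector at the top of the spectrum $[0,4]$ of $C:=D+E=2I+S+S^*$, namely $C|V\rangle=4|V\rangle$, and also $D|V\rangle=2|V\rangle+|\mathbf 1\rangle$, where $|\mathbf 1\rangle$ is the all-ones vector. One then realizes
\begin{equation*}
\mu_{1/2}^{\alpha}\big(\eta(1)=\tau_1,\dots,\eta(L)=\tau_L\big)=\frac{\langle W|\,X_{\tau_1}\cdots X_{\tau_L}\,|V\rangle}{\langle W|\,C^{L}\,|V\rangle},\qquad X_1:=D,\ X_0:=E,
\end{equation*}
and the first step is to verify that this right-hand side is stationary for the half-line generator and coincides for $\alpha>1/2$ with the measure $\mu_{1/2}^{\alpha}$ of Theorem \ref{th:Liggett}.

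Because $C|V\rangle=4|V\rangle$, all ``tail'' factors $C^k$ collapse to $4^k$ and, using $\langle W|\,|V\rangle=(1-r)^{-2}$,
\begin{equation*}
\mu_{1/2}^{\alpha}(\eta(z)=1)=\frac{\langle W|\,C^{z-1}D\,|V\rangle}{\langle W|\,C^{z}\,|V\rangle}=\frac12+\frac{(1-r)^{2}}{4^{z}}\,\langle W|\,C^{z-1}\,|\mathbf 1\rangle .
\end{equation*}
For $z=1$ this already gives $\rho_\alpha(1)=\tfrac{3-r}{4}=1-\tfrac1{4\alpha}$, so the only real work is the closed form of $\langle W|\,(2I+S+S^{*})^{z-1}\,|\mathbf 1\rangle$ for $z\ge2$. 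I would extract it from the generating function
\begin{equation*}
\sum_{z\ge1}t^{z-1}\,\langle W|\,C^{z-1}\,|\mathbf 1\rangle=\langle W|\,\big((1-2t)I-t(S+S^{*})\big)^{-1}\,|\mathbf 1\rangle ,
\end{equation*}
i.e.\ from the explicit resolvent (Green's function) of the half-line adjacency operator $S+S^{*}$; expanding in $t$ produces the ballot-type weights $\tfrac{(2(z-1)-k)!\,(k-1)}{(z-1)!\,(z-k)!}$ and the powers $2^{k}$, while the $\alpha$-dependence (which enters only through $r=\tfrac{1-\alpha}{\alpha}$) rearranges, once the prefactor $(1-r)^{2}$ is cleared, into the correction $-\alpha^{-k}$ carried by the same weight, so that the $k$-th summand is $(1+k)2^{k}-\alpha^{-k}$ exactly as claimed.

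For the block identity no further combinatorics is needed. On the span of $|V\rangle$ and $|\mathbf 1\rangle$ (which is invariant under $D=I+S^{*}$, with $D|\mathbf 1\rangle=2|\mathbf 1\rangle$) the operator $D$ is a single $2\times2$ Jordan block with eigenvalue $2$, so
\begin{equation*}
D^{m}|V\rangle=(I+S^{*})^{m}|V\rangle=2^{m}|V\rangle+m\,2^{m-1}|\mathbf 1\rangle .
\end{equation*}
Inserting a block of $m$ consecutive $D$'s into the matrix product expression for $\mu_{1/2}^{\alpha}(\eta(z)=\dots=\eta(z+m-1)=1)$ and again using $C|V\rangle=4|V\rangle$ for the surrounding factors, everything collapses to $2^{-m}\big(1+m(2\rho_\alpha(z)-1)\big)$, with the coefficient of $m$ identified from the single-site formula of the previous paragraph; this is the claimed value $\tfrac{1+(2\rho_\alpha(z)-1)m}{2^{m}}$.

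The main obstacle is therefore twofold: first, establishing the matrix product representation above with the non-normalizable vector $|V\rangle$ and identifying the resulting measure with $\mu_{1/2}^{\alpha}$ of Theorem \ref{th:Liggett}; and second, performing the resolvent expansion of $S+S^{*}$ and recognizing the outcome as the stated hypergeometric/ballot-number sum --- routine in principle, but the combinatorial bookkeeping (and the $\alpha$-rearrangement into $-\alpha^{-k}$) is where the effort lies. A route closer to the spirit of the present paper would instead identify $\mu_{1/2}^{\alpha}$ with the $N\to\infty$ limit of a two-parameter Mallows measure on the hyperoctahedral group $B_{N}$ (with $q_1=0$ in the bulk and a boundary parameter $q_2=q_2(\alpha)$, as in the Remark of Section \ref{sec:12case}), and compute its marginals via the type-$B$ analogue of the sampling algorithm of Section \ref{sec:Mal}; there the points to pin down are the dictionary $q_2\leftrightarrow\alpha$ and the control of the limit.
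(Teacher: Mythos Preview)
The paper does not itself prove this theorem: it is quoted from Grosskinsky's thesis \cite{G}, and the paper only records that the method there is the matrix product ansatz. Your proposal is exactly that approach --- the representation $D=I+S^{*}$, $E=I+S$ on $\ell^{2}(\Z_{\ge0})$ with $\langle W|=(r^{n})_{n\ge0}$ and $|V\rangle=(n+1)_{n\ge0}$, together with the identities $C|V\rangle=4|V\rangle$, $D|V\rangle=2|V\rangle+|\mathbf 1\rangle$, $D|\mathbf 1\rangle=2|\mathbf 1\rangle$, is precisely the computation carried out in \cite{G}, and the two obstacles you flag (justifying the formal right vector $|V\rangle$ and extracting the ballot-number sum from the resolvent of $S+S^{*}$) are handled there.
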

As noted in \cite{G}, it is possible to see that $\rho(z) \to 1/2$ as $z \to \infty$, which allows to recover the Liggett's description.

Let $q=0$, and consider now TASEP on a half-line with two types of particles (and one type of holes). As an initial configuration, set $\eta (1)=\eta (2)= \dots = \eta (k-1) = 1$, $\eta(k)=\dots=\eta(l-1)= +\infty$, $\eta(l)=2$ (1 stands for the first class particles, 2 stands for the second class particles, and $+\infty$ stands for holes), and the rest of positions is filled by holes as well. Next, the interaction of particles happen as in the usual TASEP, and we have special rules for the boundary: namely, we postulate that if position 1 contains the second class particle or a hole, then the first class particle can be injected into 1, and the second class particle (or a hole) disappears from the system. Note that only first class particles can be injected, so while the second class particle is initially in the system, it can disappear. We will be interested in the probability that this happens.

\begin{theorem}
\label{th:TASEPalpha}
For TASEP on an infinite half-line with a source, in the notations and assumptions above, the probability that the second class particle exits the system is equal to
$$
\frac{1 + (2 \rho_{\alpha} (k)-1) (l-k)}{2^{l-k+1}}, \qquad \mbox{if $\alpha \ge 1/2$},
$$
and it is equal to $\alpha^{l-k+1}$, if $\alpha \le 1/2$.
\end{theorem}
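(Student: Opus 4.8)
The strategy is to use the type--position symmetry, i.e.\ the involution $\mathfrak i$ of Proposition \ref{prop:CPsymmetry}, to identify the two-species half-line TASEP described above with the single-species half-line TASEP of Theorems \ref{th:Liggett} and \ref{th:Grosskinsky}, and then to read the answer off from the stationary measure $\mu_\alpha$. First I would realize the two-species process --- the boundary mechanism and the given initial configuration included --- as a projection to a parabolic quotient of a random walk on a Hecke algebra: either that of the hyperoctahedral group of Section \ref{sec:12case}, or, since $q=0$, that of a symmetric group in which position $0$ is an always-occupied reservoir of first class particles that feeds position $1$ through an edge of rate $\alpha$ (so injection at $1$ is just a bulk jump from the reservoir, and there is no deletion because $q\alpha=0$). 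In the same Hecke algebra the single-species process of Theorems \ref{th:Liggett}--\ref{th:Grosskinsky} is the projection to a different parabolic quotient. Every simple reflection $s$ satisfies $s=s^{-1}$, so each $T_s$, as well as $T_e$ and hence any boundary element built from them, is fixed by $\mathfrak i$; and reversing the time order of the driving Poisson clocks preserves their joint law. Combining these facts with Proposition \ref{prop:CPsymmetry}, applied to the product of generators that builds the walk up to time $t$ with the initial state placed on the appropriate side, gives that the law of $\big(\text{position of a prescribed type at time }t\big)$ in the two-species process equals the law of a corresponding $\big(\text{type configuration at time }t\big)$ in the single-species process started from the ``transposed'' initial data. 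This is the only place where the full Hecke algebra is used; relating the two small representations directly seems hard.

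Next I would push the event $\{\text{the second class particle exits the system}\}$ through this symmetry. Since ``the second class particle has left by time $t$'' is increasing in $t$ (once it is pushed out at the boundary it never returns), its probability equals $\lim_{t\to\infty}$ of the probability of the corresponding time-$t$ event, and it suffices to work at fixed large $t$ and then let $t\to\infty$. Under the type--position symmetry the interval of holes $[k,l-1]$ in the initial configuration, of length $l-k$, is dual to the requirement that the sites $k,k+1,\dots,l-1$ all be occupied in the single-species process at time $t$, while the initial position $l$ of the second class particle is dual to the occupation of one additional site $z_t$ that drifts off to $+\infty$ as $t\to\infty$ --- this reflects that, conditionally on never being absorbed, the second class particle escapes to $+\infty$ in this $q=0$ dynamics, so the relevant dual coordinate has order $t$. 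Hence, writing $\eta_t$ for the single-species half-line TASEP with source of strength $\alpha$,
\[
\mathbf P(\text{second class particle exits})
=\lim_{t\to\infty}\mathbf P\big(\eta_t(k)=\eta_t(k+1)=\dots=\eta_t(l-1)=1,\ \eta_t(z_t)=1\big),\qquad z_t\to\infty.
\]

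Finally I would evaluate this limit with Theorems \ref{th:Liggett} and \ref{th:Grosskinsky}. By Theorem \ref{th:Liggett} the single-species process converges, in the sense of finite-dimensional distributions, to $\mu_\alpha$ --- product Bernoulli$(\alpha)$ for $\alpha\le 1/2$, and $\mu^{\alpha}_{1/2}$ for $\alpha>1/2$ --- and, since $z_t\to\infty$, the occupation of $z_t$ asymptotically decouples from the fixed block $\{k,\dots,l-1\}$ with the bulk density, which is $\alpha$ when $\alpha\le 1/2$ and $1/2$ when $\alpha\ge 1/2$; a routine approximation handles the simultaneous limit $t,z_t\to\infty$. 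Thus the limit equals $\mu_\alpha\big(\eta(k)=\dots=\eta(l-1)=1\big)\cdot\min(\alpha,\tfrac12)$. For $\alpha\le 1/2$, $\mu_\alpha$ is i.i.d.\ Bernoulli$(\alpha)$, giving $\alpha^{\,l-k}\cdot\alpha=\alpha^{\,l-k+1}$. For $\alpha\ge 1/2$, the consecutive-occupation formula of Theorem \ref{th:Grosskinsky} with $z=k$ and $m=l-k$ gives $\mu^{\alpha}_{1/2}\big(\eta(k)=\dots=\eta(l-1)=1\big)=\big(1+(2\rho_\alpha(k)-1)(l-k)\big)/2^{\,l-k}$, and multiplying by $1/2$ yields $\big(1+(2\rho_\alpha(k)-1)(l-k)\big)/2^{\,l-k+1}$, as claimed.

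The main obstacle is the middle step: pinning down the exact Hecke-algebra realization of the source (and the side on which the initial state acts) and verifying that the involution interchanges precisely the two observables named above --- in particular that the hole interval maps to a block of the same length anchored at $k$, and that the second class particle's position maps to a site escaping to $+\infty$ --- rather than to shifted, complementary, or current-type events. Everything after that is routine given the explicit description of $\mu_\alpha$.
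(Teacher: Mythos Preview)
Your overall strategy is the paper's: realize the process as a random walk on $\mathcal H(B_N)$, apply the involution $\mathfrak i$ of Proposition~\ref{prop:CPsymmetry} to convert the two-species question into a single-species one, and then evaluate via Theorems~\ref{th:Liggett} and~\ref{th:Grosskinsky}.

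The gap is exactly where you flagged it, and your guess for the dual event is wrong: there is no site $z_t\to\infty$, and the dual event is not ``block $\{k,\dots,l-1\}$ plus an asymptotically independent far-away occupation''. In the paper the initial permutation is written as the reduced word $\pi_0=(l-1,l)(l-2,l-1)\cdots(k,k+1)$, so the time-$t$ element is $H_r\cdots H_1\,T_{\pi_0}$ with the $H_i$ Poisson-driven generators. Applying $\mathfrak i$ (and using that the $H_i$ are i.i.d.) gives an element with the same law as $T_{\pi_0^{-1}}\,H_r\cdots H_1$: one runs the walk on $B_N$ from the identity for time $t$ and \emph{afterwards} performs the updates at $(l-1,l),(l-2,l-1),\ldots,(k,k+1)$, in that order; the event is that position $k$ then carries a negative type. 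Projected to the single-species picture (negative types $=$ particles), each $q=0$ update $(j,j+1)$ deterministically pushes a particle from $j$ to $j+1$ exactly when $j$ is occupied and $j+1$ is empty; unwinding the sequence shows that position $k$ is still occupied after all updates if and only if \emph{every} site $k,k+1,\ldots,l$ was occupied before them. Hence the quantity to compute is
\[
\lim_{t\to\infty}\mathbf P\big(\eta_t(k)=\eta_t(k+1)=\cdots=\eta_t(l)=1\big)
\]
for the single-species half-line process---a \emph{fixed} consecutive block of length $l-k+1$, evaluated directly in the stationary measure. The initial data is a fixed finite word in the $T_s$'s, and $\mathfrak i$ merely moves it to the other side of the continuous-time product; nothing in the construction produces a time-dependent dual coordinate.

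As a check, for $\alpha\le 1/2$ the product Bernoulli$(\alpha)$ measure gives $\alpha^{\,l-k+1}$ on the nose, confirming the block length is $l-k+1$. Your decomposition into a length-$(l-k)$ block times a bulk density happens to reproduce the displayed $\alpha\ge 1/2$ formula numerically, but that is an accident of how the Grosskinsky formula scales, not evidence for the $z_t$ mechanism; the paper's argument invokes Theorem~\ref{th:Grosskinsky} with $z=k$ and $m=l-k+1$.
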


\begin{proof}

Consider a homogeneous continuous time multi-species ASEP on the hyperoctahedral group $B_N$ as defined in Section \ref{sec:12case}. As an initial element let us take the permutation $(l-1,l) (l-2,l-1) \dots (k,k+1)$, which satisfies $\pi(k)=l$, $\pi (i)=i-1$, for $k+1 \le i \le l$, and $\pi(i)=i$ for all other $i \in \Z_{\ge 1}$. Let $\pi_t^{hTASEP}$ be the random element after time $t$ reached by the random walk. By regarding all elements $\le k-1$ as the first class particles, $k$ as the second class particle, and elements $>k$ as holes, we obtain that the quantity in question is equal to
$$
\lim_{t \to \infty} \mathrm{Prob} \left( \pi_t (k) < 0 \right).
$$
By applying Proposition \ref{prop:CPsymmetry} we obtain that this limit is equal to
\begin{equation}
\label{eq:05lineInv}
\lim_{t \to \infty} \mathrm{Prob} \left( \tilde \pi_t^{-1} (k) < 0 \right),
\end{equation}
where the configuration $\tilde \pi_t$ is constructed as a result of the two step procedure: First, we run the (homogeneous continuous time) multi-species ASEP on $B_N$ for time $t$, and then do updates in $(l-1,l)$, $(l-2, l-1)$, ..., $(k,k+1)$ (in this order). If we will consider elements $<0$ as particles, and the elements $>0$ as holes, we obtain that $\pi_t^{-1} (k) < 0$ if and only if all positions from $k$ to $l$ are filled by particles after the continuous time process and before the discrete updates. Applying Theorems \ref{th:Liggett} and \ref{th:Grosskinsky}, we obtain the statement.
\end{proof}

Let us now discuss the model for a general $q \in [0;1)$. Since the source has nontrivial asymmetry, one needs to introduce the opportunity for the second class particle to jump back into the system. It is natural to do this in the following way: assume that we have two special particles --- the second class particle and the third class particle. Exactly one of them is in the system at any time. Inside the bulk the particles interact as in ASEP. At the boundary, if the second class particle is in position $1$, then it is replaced by the third class particle with rate $\alpha q$, while if the third class particle is at position $1$, then it is replaced by the second class particle with rate $\alpha$. Similarly, the first class particle at $1$ is replaced by a hole with rate $\alpha q$ and the hole at $1$ is replaced by the first class particle with rate $\alpha$.

With such definitions, the second class particle can exit the system and then enter again. We will be interested in probability that the second class is in the system after large time $t$. We will assume that $\alpha \le 1/2$, because the detailed information (comparable with Theorem \ref{th:Grosskinsky}) about $\mu_{1/2}^{\alpha}$ is not known for $q \ne 0$ case.

\begin{theorem}
\label{th:05lineTASEP}
Consider open ASEP with the assumptions above and the initial configuration $\eta_0 (1)=\eta (2)= \dots = \eta (k-1) = 1$, $\eta(k)=\dots=\eta(l-1)= +\infty$, $\eta(l)=2$, and the rest of positions is filled by holes as well. Let $p_t$ be the probability that the
second class particle is in the system after time $t$. Then we have
$$
\lim_{t \to \infty} p_t = \alpha \left( \alpha + (1-\alpha)q \right)^{l-k}.
$$
\end{theorem}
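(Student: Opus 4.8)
The plan is to reproduce the strategy of the proof of Theorem~\ref{th:TASEPalpha}: realize the process as a projection of the homogeneous continuous-time multi-species ASEP on the hyperoctahedral group $B_N$, and then use the involution $\mathfrak i$ of Proposition~\ref{prop:CPsymmetry} to replace the complicated initial condition by a short sequence of deterministic updates applied \emph{after} the process has relaxed to equilibrium.

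First I would set up the $B_N$ model. On each bulk generator of Section~\ref{sec:12case} put a Poisson clock of rate $1$, and on the boundary generator $s_0$ a Poisson clock of rate $\alpha$; by the Hecke relations~\eqref{eq:HeckeRules}, for a suitable choice of the underlying linear order on types the induced single-species rules are exactly ASEP with rates $1$ and $q$ in the bulk, injection of a first class particle at site $1$ at rate $\alpha$, deletion at rate $\alpha q$, and the replacement of the second class particle by the third (and vice versa) at site $1$ at rates $\alpha q$ and $\alpha$. As initial element I would take $\pi_0=(l-1,l)(l-2,l-1)\cdots(k,k+1)$, exactly as in Theorem~\ref{th:TASEPalpha}; its image under the projection is the prescribed initial configuration, with the second class particle at position $l$. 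In these terms ``the second class particle is in the system at time $t$'' is an event of the one-sided form $\{\pi_t\in A\}$, where $A$ is the set of elements of $B_N$ sending the second class type to a positive position.

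Next I would apply Proposition~\ref{prop:CPsymmetry}. Writing the configuration at time $t$ as $W_t=U_tT_{\pi_0}$ with $U_t$ the random product coming from the clocks, we get $\mathfrak i(W_t)=T_{\pi_0^{-1}}\,\mathfrak i(U_t)$; since $\mathfrak i(T_s)=T_s$ for every $s\in S$ and the superposition of Poisson clocks on $[0,t]$ is reversible in time, $\mathfrak i(U_t)$ has the same law as $U_t$. Hence the desired probability equals $\lim_{t\to\infty}\mathrm{Prob}(\tilde\pi_t\in A')$, where $\tilde\pi_t$ is obtained by running the homogeneous ASEP on $B_N$ from the identity for time $t$ and then performing the $l-k$ updates $T_{(l-1,l)},T_{(l-2,l-1)},\dots,T_{(k,k+1)}$ in this order, and $A'=\{w^{-1}:w\in A\}$. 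As $t\to\infty$ the homogeneous process projects (for $\alpha\le 1/2$) to the product Bernoulli measure with parameter $\alpha$, the stationary measure of Theorem~\ref{th:Liggett}; since $A'$, once the finitely many updates are resolved, depends only on finitely many sites among $k,\dots,l$, finite-dimensional convergence suffices, and the problem reduces to computing a probability under the product Bernoulli($\alpha$) measure.

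Finally I would carry out that computation. Each update $T_{(j,j+1)}$ swaps the contents of sites $j$ and $j+1$ deterministically in the length-increasing local configuration and, with probability $q$, in the length-decreasing one, leaving the configuration unchanged otherwise. Tracking the transformed event through the $l-k$ updates applied to independent Bernoulli($\alpha$) occupation variables, the contributions should decouple: each update contributes an independent factor $\alpha\cdot 1+(1-\alpha)\cdot q=\alpha+(1-\alpha)q$ (an occupied site, probability $\alpha$, forces the favourable branch, while an empty site, probability $1-\alpha$, still realizes it with probability $q$), and a single leftover factor $\alpha$ comes from the remaining boundary-most relevant site, giving $\alpha\bigl(\alpha+(1-\alpha)q\bigr)^{l-k}$. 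The hard part is precisely this step: correctly identifying $A'$ and establishing the clean product structure through the cascade of $q$-weighted swaps. At $q=0$ it collapses to the event ``all of the $l-k+1$ relevant sites are occupied'', with probability $\alpha^{l-k+1}$, which recovers the $\alpha\le 1/2$ case of Theorem~\ref{th:TASEPalpha} and serves as a consistency check; for $q>0$ the branching has to be handled carefully. This step is also what forces the hypothesis $\alpha\le 1/2$, so that the equilibrium is the plain product measure rather than $\mu_{1/2}^{\alpha}$, for which the needed correlation data is unavailable when $q\ne 0$.
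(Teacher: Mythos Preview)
Your proposal is correct and follows essentially the same route as the paper's proof: embed the dynamics in the $B_N$ multi-species ASEP, apply the involution $\mathfrak i$ to trade the initial word $\pi_0$ for a terminal string of $l-k$ deterministic updates $T_{(l-1,l)},\dots,T_{(k,k+1)}$, invoke Liggett's theorem to replace the continuous-time part by the product Bernoulli$(\alpha)$ measure, and finally compute the effect of the updates. The paper makes two things explicit that you leave as ``the hard part'': first, the transformed event $A'$ is simply $\{\text{position }k\text{ carries a particle}\}$ in the single-species projection of the reversed process; second, the product structure is established by the one-line induction that after the $m$ updates $T_{(l-1,l)},\dots,T_{(l-m,l-m+1)}$ the probability of a particle at site $l-m$ equals $\alpha(\alpha+(1-\alpha)q)^m$, using that site $l-m$ is still an independent Bernoulli$(\alpha)$ variable at the moment it is first touched. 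Your heuristic ``occupied site forces the favourable branch, empty site succeeds with probability $q$'' is exactly this induction step.
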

\begin{remark}
Recall that we consider $\alpha \le 1/2$ case only. When $q=0$ we recover the statement from Theorem \ref{th:05lineTASEP}.
\end{remark}
\begin{proof}
The proof is analogous to the proof of Theorem \ref{th:05lineTASEP} up to the point of obtaining equation \eqref{eq:05lineInv}, and a new part is to compute the probability that the position $k$ is filled by a particle in the reversed time process (so the discrete updates are at the end). Theorem \ref{th:Liggett} implies that before the discrete updates we obtain the Bernoulli product measure in the limit $t \to \infty$ with parameter $\alpha$. Then we apply to it the updates: first at positions $(l-1,l)$, next $(l-2,l-1)$, ...., finally at $(k,k+1)$. One shows by an induction argument in $m=1,2,\dots$ that the probability to have a particle at position $(l-m)$ after updates from $(l-1,l)$ to $(l-m,l-m+1)$ is $\alpha \left( \alpha + (1-\alpha) q \right)^{m}$. This implies the result.
\end{proof}

\subsection{Second class particle in qTAZRP}
\label{sec:2qTazrp}

In this section we will consider q-totally asymmetric zero range process (qTAZRP). It was introduced by Sasamoto-Wadati \cite{SW}. An equivalent process q-TASEP was obtained by Borodin-Corwin \cite{BC} in the framework of dynamics on Macdonald processes. We will consider the following space of configurations for this process: Let $\Z_{\ge -1}$ be the set of positions. In each non-negative position one has finite amount of particles, and at $(-1)$ one has infinitely many particles. We will allow at most one second class particle in the configuration, and it must be in a non-negative position; in particular, this means that all particles at $(-1)$ are first class particles. If a certain position contains $l$ first class particles, then with rate $(1-q^l)$ one of these particles jump one position to the right. The rule for the unique second class particle is a bit more complicated --- if it is in a position with $l$ first class particles, then it jumps one position to the right with rate $q^l (1-q)$. The jumps from $(-1)$ to $0$ happen with rate 1, which corresponds to the case $l = \infty$ in these rules.

Let us start with describing known results about the evolution of the system with first class particles only. We would need to introduce some notation. For $\alpha \in [0;1)$ set
$$
\kappa (\alpha ) := \sum_{k=0}^{\infty} \frac{q^k}{(1- \alpha q^k)^2}.
$$
Note that $\kappa (\alpha )$ is a strictly increasing function with $\kappa (0 ) = 1/(1-q)$ and $\kappa (1) = +\infty$, so the inverse function $\alpha = \alpha (\kappa)$ is well-defined. The following theorem is a hydrodynamical limit for the step initial conditions which follows from general results of \cite{AV}, with a description of the limit density through the entropic solution to Burgers equation. The explicit limiting density in our case was computed in Ferrari-Veto \cite{FV}.

\begin{theorem}
\label{th:hydroQtazrp}
Consider qTAZRP with an initial configuration in which all non-negative positions are empty. Let $X_N (t)$ be the number of particles in position $N$ after time $t$. Then we have
$$
\lim_{N \to \infty} \mathrm{Prob} \left( X_{N} ( \kappa N ) = l \right) = (\alpha;q)_{\infty} \frac{\alpha^l}{(q;q)_l}, \qquad l \in \Z_{\ge 0},
$$
and the random variables $\{ X_{N+s} ( \kappa N ) \}_{s=0}^{m}$ are asymptotically independent and identically distributed for any fixed $m \in \Z_{\ge 1}$.
\end{theorem}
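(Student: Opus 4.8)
The plan is to derive the statement by combining a hydrodynamic limit with a local-equilibrium statement and an explicit identification of the limiting density. As a first step I would quote the hydrodynamic limit of Andjel--Vares \cite{AV} for attractive particle systems on $\Z$: started from the step configuration (all non-negative sites empty, a reservoir of infinitely many particles at $-1$), the macroscopic density $\rho(x,t)$ of qTAZRP is the entropic solution of $\partial_t \rho + \partial_x j(\rho)=0$, where $j(\rho)$ is the stationary current at density $\rho$. Since the initial profile is a single down-step at the origin and $j$ will turn out to be concave, the entropic solution is a rarefaction fan, $\rho(x,t)=R(x/t)$ with $R$ determined implicitly by $j'(R(\xi))=\xi$ on the relevant interval.

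Next I would pin down the one-parameter family of product stationary measures of qTAZRP. These are the product measures with single-site marginal the $q$-geometric law $\nu_\alpha(l)=(\alpha;q)_\infty\,\alpha^l/(q;q)_l$, $\alpha\in[0,1)$, which one checks directly from the jump rates $1-q^l$ (this is the blocking/product measure of Sasamoto--Wadati \cite{SW}). A short computation based on the $q$-exponential identity $\sum_l z^l/(q;q)_l=1/(z;q)_\infty$ then gives the current $j(\alpha)=\E_{\nu_\alpha}[1-q^l]=\alpha$ and the density $\rho(\alpha)=\E_{\nu_\alpha}[l]=\alpha\sum_{k\ge 0}q^k/(1-\alpha q^k)$; differentiating the latter once more yields $\rho'(\alpha)=\kappa(\alpha)$, hence $j'(\rho)=(dj/d\alpha)/(d\rho/d\alpha)=1/\kappa(\alpha)$. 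Consequently, along the ray $x=N$, $t=\kappa N$, i.e. $\xi=1/\kappa$, the limiting density is $\rho(\alpha)$ with $\kappa(\alpha)=\kappa$, so the governing parameter is precisely $\alpha=\alpha(\kappa)$; the strict monotonicity of $\kappa(\cdot)$ recorded in the text makes this inverse well-defined, and $\kappa(0)=1/(1-q)$ matches the edge of the fan.

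It then remains to upgrade convergence of empirical densities to convergence of the microscopic law in a finite window around the moving site $N$. For this I would invoke local equilibrium at a continuity point of the limiting profile for attractive one-dimensional systems: near such a point the joint law of $(X_N,X_{N+1},\dots,X_{N+m})$ at time $\kappa N$ converges to $\nu_{\alpha(\kappa)}^{\otimes(m+1)}$. Because $R$ is continuous (in fact smooth) in the interior of the rarefaction fan, this applies and delivers simultaneously the $q$-geometric one-point limit $(\alpha;q)_\infty\alpha^l/(q;q)_l$ and the asymptotic independence and identical distribution of $\{X_{N+s}(\kappa N)\}_{s=0}^m$. The explicit shape of $R$, equivalently the formula for $\kappa(\alpha)$, together with the window convergence, is exactly what was carried out by Ferrari--Veto \cite{FV}, so in practice I would cite \cite{AV} and \cite{FV} for these two ingredients rather than reprove them.

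I expect the local-equilibrium step to be the real obstacle: the hydrodynamic limit of \cite{AV} only controls spatial averages of the occupation numbers, whereas the theorem asserts an exact limiting law at a single (moving) site, so one genuinely needs the stronger statement that a microscopic window around a continuity point of the profile equilibrates to the product measure, with some care that the ray $\xi=1/\kappa$ indeed falls in the continuity (rarefaction-fan) region. By contrast, the identification of the stationary measures and the $q$-series manipulations showing $\kappa(\alpha)=\rho'(\alpha)$ and $j'(\rho)=1/\kappa(\alpha)$ are routine once those measures are in hand.
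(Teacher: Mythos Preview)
The paper does not give its own proof of this theorem: it simply attributes the result to the general hydrodynamic limit of \cite{AV} together with the explicit identification of the limiting density carried out in \cite{FV}. Your proposal is precisely an elaboration of that attribution---hydrodynamics from \cite{AV}, the $q$-geometric stationary measures and the computation $\kappa(\alpha)=\rho'(\alpha)$, $j'(\rho)=1/\kappa(\alpha)$, and then local equilibrium at a continuity point of the rarefaction profile---so it matches the paper's approach, and you correctly flag the local-equilibrium upgrade as the substantive step that the paper leaves entirely to the citations.
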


Let us now study the qTAZRP with one second class particle, and describe the behavior of this particle.

\begin{theorem}
\label{th:qTAZRPsecClass}
Consider qTAZRP with an initial configuration in which all non-negative positions are empty except of position $s$ which contains one second class particle and nothing else. Let $\mathfrak{h} (t)$ be the position of the second class particle after time $t$. Then
$$
\lim_{t \to \infty} \mathrm{Prob} \left( \frac{\mathfrak{h} (t)}{t} \ge \frac{1}{\kappa (\alpha)}  \right) =
\begin{cases}
\alpha, \qquad & s=0, \\
1 - \alpha^{2} ( 1- \alpha)^{s-1} \qquad & s \ge 1.
\end{cases}
$$
\end{theorem}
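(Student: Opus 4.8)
\textbf{Proof proposal for Theorem~\ref{th:qTAZRPsecClass}.}
The plan is to follow exactly the strategy used in the proofs of Theorems~\ref{th:TASEPalpha} and \ref{th:05lineTASEP}: lift the process to a random walk on a Hecke algebra, apply the type-position symmetry of Proposition~\ref{prop:CPsymmetry}, project the resulting ``reversed'' process to a smaller representation, and finally identify the answer using the hydrodynamic input of Theorem~\ref{th:hydroQtazrp}. Concretely, I would first realize the qTAZRP with one second class particle as a projection of the multi-species $\mathrm{ASEP}(q,M)$ on $\mathcal H(S_n)$ in the $M\to\infty$ degeneration described at the end of Section~\ref{sec:asep-qj}, with the blocks of size $M$ playing the role of sites, and with one distinguished element marking the second class particle. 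The event $\{\mathfrak h(t)/t \ge 1/\kappa(\alpha)\}$ becomes, after the identification of types, the event that the second class particle has passed site $N\approx t/\kappa(\alpha)$; in the Hecke-algebra picture this is a statement about the relative order (in the permutation $\pi_t$) of the tagged element and the elements associated to the first $N$ blocks.

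Next I would apply $\mathfrak i$ (equivalently Proposition~\ref{prop:CPsymmetry}) to interchange the roles of positions and types, exactly as in \eqref{eq:05lineInv}. This turns the question about the second class particle in the forward qTAZRP into a question about a configuration $\tilde\pi_t$ obtained by running the first-class-particle-only qTAZRP for time $t$ and then performing a fixed finite sequence of deterministic/elementary updates encoding the initial placement of the second class particle at site $s$. For the case $s=0$ the extra updates are trivial and the answer is simply read off from Theorem~\ref{th:hydroQtazrp}: the probability that site $N\approx t/\kappa(\alpha)$ is occupied (i.e.\ $X_N\ge 1$) tends to $1-(\alpha;q)_\infty = \alpha$, giving the first line. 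For $s\ge 1$ one must, as in the proof of Theorem~\ref{th:05lineTASEP}, propagate the finitely many updates against the limiting product measure from Theorem~\ref{th:hydroQtazrp} and compute, by an induction on the number of updates $1,\dots,s$, the probability that the relevant site ends up occupied in the reversed picture; I expect this induction to produce the factor $1-\alpha^2(1-\alpha)^{s-1}$.

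The main obstacle, and the part needing the most care, is the bookkeeping at the boundary/reversal step when $s\ge 1$: one must correctly translate the ``second class particle starts at site $s$'' initial data into the right sequence of Hecke-algebra updates on $\tilde\pi_t$, and then verify that these updates interact with the asymptotically i.i.d.\ $q$-geometric occupation variables $\{X_{N+j}(\kappa N)\}$ of Theorem~\ref{th:hydroQtazrp} in a way that telescopes to $\alpha^2(1-\alpha)^{s-1}$. In particular one has to check that the updates only involve sites $N, N+1, \dots, N+s$ (so that the asymptotic independence and the explicit $q$-geometric law apply), and that the rates $q^l(1-q)$ for the second class particle versus $(1-q^l)$ for first class particles are reproduced correctly under the projection from $\mathrm{ASEP}(q,M)$ as $M\to\infty$; this is the same type of Mallows-measure sampling computation as in Section~\ref{sec:asep-qj}, but now one tracks the tagged element. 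A secondary technical point is the passage from the finite group $S_n$ (or $B_N$, if a boundary source is needed) to the infinite-volume qTAZRP, which is handled by the Harris-type argument recalled in Section~\ref{sec:3}; since the tagged particle and the relevant sites stay in a finite window on the relevant time scale, this localization is routine. Once the reversed-process occupation probabilities are computed, substituting into the expressions and using the definition of $\kappa(\alpha)$ and its inverse yields the stated dichotomy.
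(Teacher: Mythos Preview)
Your overall strategy is exactly the paper's: lift to $\mathrm{ASEP}(q,M)$ on $\mathcal H(S_n)$, apply the involution $\mathfrak i$, and read off the answer from Theorem~\ref{th:hydroQtazrp} after the discrete updates encoding the initial placement of the second class particle. However, your execution of the $s=0$ step contains a genuine error that propagates to the $s\ge 1$ analysis as well.

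The involution $\mathfrak i$ acts on the full Hecke algebra $\mathcal H(S_{NM})$, so the event on the reversed side is about a \emph{specific microscopic position} (position $xM$, say, the rightmost slot in block $x$) in the $NM$-site chain, not about the projected qTAZRP observable ``site $N$ is occupied''. Your identification of the reversed event as $\{X_N\ge 1\}$ is therefore wrong, and your asserted identity $1-(\alpha;q)_\infty=\alpha$ is simply false for $q>0$ (e.g.\ at $q=\alpha=1/2$ one has $1-(\alpha;q)_\infty\approx 0.711$). What one must actually compute is, conditionally on block $x$ containing $l$ first class particles, the probability that the rightmost slot of that block carries a particle; by the Mallows sampling algorithm of Section~\ref{sec:Mal} this is $(1-q^{l})/(1-q^{M})\to 1-q^{l}$ as $M\to\infty$. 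Averaging against the limiting law of Theorem~\ref{th:hydroQtazrp} then gives
\[
\sum_{l\ge 0}(1-q^{l})\,(\alpha;q)_\infty\frac{\alpha^{l}}{(q;q)_l}
=1-(\alpha;q)_\infty\sum_{l\ge 0}\frac{(q\alpha)^{l}}{(q;q)_l}
=1-\frac{(\alpha;q)_\infty}{(q\alpha;q)_\infty}=1-(1-\alpha)=\alpha,
\]
which is how the first line actually arises. The same in-block Mallows bookkeeping is needed for $s\ge 1$: the ``finitely many updates'' you refer to are in fact $sM$ nearest-neighbour Hecke updates (positions $(x-s)M,\dots,xM$), and their effect on the reversed process has to be tracked at the microscopic level before passing to $M\to\infty$; only after this does the answer collapse to a computation involving the $s+1$ asymptotically independent site occupations. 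Once you correct the event and carry the Mallows factor through the induction, the paper's formula $1-\alpha^{2}(1-\alpha)^{s-1}$ follows.
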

\begin{remark}
Since $\alpha = \alpha (\kappa)$ is a well-defined function, this theorem gives the limiting distribution for the scaled position of the second class particle. In particular, its support is $[0;(1-q)]$.
\end{remark}

\begin{proof}
We consider the random walk on Hecke algebras which generate qTAZRP/ASEP (q,M), see Section \ref{sec:asep-qj}. Applying Proposition \ref{prop:CPsymmetry}, we obtain
$$
\mathrm{Prob} \left( \mathfrak{h} (t) \ge xM \right) = \mathrm{Prob} \left( \hat \pi_t (xM) \le 0 \right),
$$
where $\hat \pi_t$ is the configuration obtained as a result of the continuous time qTAZRP up to time $t$ and doing discrete updates first at $((x-s)M, (x-s)M+1)$, next in $((x-s)M+1, (x-s)M+2)$, ..., finally at $(xM-1, xM)$. Sending $M$ to infinity, applying Theorem \ref{th:hydroQtazrp} and performing an analysis of arising cases related to discrete updates, we arrive at the statement of the theorem.
\end{proof}

\end{document}